\newtheorem{theorem}{Theorem}[section]
\newtheorem{lemma}[theorem]{Lemma}
\newtheorem{proposition}[theorem]{Proposition}
\newtheorem{corollary}[theorem]{Corollary}
\newtheorem*{thm*}{Theorem}
\theoremstyle{remark}
\theoremstyle{definition}
\newtheorem{problem}{Problem}
    \newlength{\circlabelwidth}
    \setlist{nosep,before={\parskip=0pt plus 2pt},after={\parskip=0.5em plus 2pt}}
    \setlist[enumerate]{label=\textup{\arabic*.}}
    \newlist{subprob}{enumerate}{2}
        \setlist[subprob,1]{label={(\roman*)}}
        \setlist[subprob,2]{label={(\arabic*)}}
    \setlist[itemize]{labelindent=10pt,labelwidth=\circlabelwidth,leftmargin=!,label=$\circ$}
    \newlist{problems}{enumerate}{3}
        \setlist[problems,1]{before=\setupstar,label=\textup{\arabic*.}, itemsep=2pt, topsep=8pt,ref=\textup{\arabic*}}
        \setlist[problems,2]{before=\setupstar,label=(\alph*),parsep=0pt}
        \setlist[problems,3]{before=\setupstar,label=(\roman*),parsep=0pt}
\DeclareMathOperator{\conv}{conv}
\newcommand*{\R}{\mathbb{R}}
\newcommand*{\Z}{\mathbb{Z}}
\title{Improved Helly numbers of product sets}
\author{Srinivas Arun}
\author{Travis Dillon}
\address{Srinivas Arun, Massachusetts Institute of Technology, Cambridge, MA 02139, USA}
\email{sarun@mit.edu}
\address{Travis Dillon, Massachusetts Institute of Technology, Cambridge, MA 02139, USA}
\email{travis.dillon@mit.edu}
\begin{document}

\begin{abstract}
A finite family $\mathcal F$ of convex sets is \emph{$k$-intersecting} in $S \subseteq \R^d$ if the intersection of every subset of $k$ convex sets in $\mathcal F$ contains a point in $S$. The \emph{Helly number} of $S$ is the minimum $k$, if it exists, such that every $k$-intersecting family contains a point of $S$ in its intersection. In this paper, we improve bounds on the Helly number of \emph{product sets} of the form $A^d$ for various sets $A \subseteq \R$, including the ``exponential grid'' $A = \{\alpha^n : n \in \mathbb{N}\}$ and sets $A\subseteq \mathbb{Z}$ defined by congruence relations.
\end{abstract}

\maketitle

\section{Introduction}

Helly's theorem is a fundamental result in discrete and convex geometry. It says that \emph{in a finite collection $\mathcal C$ of convex sets in $\R^d$, if the intersection of any $d+1$ or fewer sets in $\mathcal C$ is nonempty, then $\bigcap \mathcal C$ is nonempty, as well}. Helly's theorem was first proven by Eduard Helly in 1913 (though not published until 1923 \cite{Helly}) and independently by Radon \cite{Radon} in 1921. Helly's theorem was the invitation to a veritable cornucopia of results in discrete geometry, and the last century has led to dozens of variations on Helly's theorem. B\'ar\'any and Kalai's survey \cite{Barany-Helly-survey} provides a recent and extensive look at results in the broad area of related results, while the survey by Amenta, de Loera, and Sober\'on \cite{Helly-survey} focuses more specifically on Helly's theorem.

In 1973, Doignon proved a Helly-type theorem for the integer lattice \cite{Doignon}.

\begin{thm*}[Doignon's theorem]
    Let $\mathcal C$ be a finite collection of convex sets in $\R^d$. If the intersection of any $2^d$ or fewer sets in $\mathcal C$ contains an integer point, then $\bigcap \mathcal C$ does, too.
\end{thm*}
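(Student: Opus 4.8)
The plan is to prove the contrapositive: if $\mathcal C$ is a finite family of convex sets in $\R^d$ whose intersection contains no integer point, I want to produce a subfamily of size at most $2^d$ with the same property. Passing to a subfamily that is minimal with respect to inclusion among those whose intersection misses $\Z^d$, I may assume $\mathcal C=\{C_1,\dots,C_n\}$ is \emph{critical}: $\bigcap\mathcal C\cap\Z^d=\emptyset$, while $\bigcap(\mathcal C\setminus\{C_j\})$ contains an integer point for every $j$. The goal is then to show $n\le 2^d$.

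For each $j$, criticality hands us an integer point $p_j\in\bigcap_{i\ne j}C_i$; necessarily $p_j\notin C_j$, since otherwise $p_j\in\bigcap\mathcal C$. I would then argue that $p_1,\dots,p_n$ represent pairwise distinct residues modulo $2$; as $(\Z/2\Z)^d$ has $2^d$ elements, this forces $n\le 2^d$. Suppose instead $p_j\equiv p_k\pmod 2$ for some $j\ne k$, and set $q=\tfrac12(p_j+p_k)\in\Z^d$. Since $p_j$ and $p_k$ both lie in every $C_i$ with $i\notin\{j,k\}$, convexity gives $q\in\bigcap_{i\notin\{j,k\}}C_i$; and because $q$ is an integer point, $q\notin\bigcap\mathcal C$, so $q\notin C_j$ or $q\notin C_k$ — say $q\notin C_j$.

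From here I would run a descent. If $q\in C_k$, then $q\in\bigcap_{i\ne j}C_i$, so $q$ is a legal replacement for the witness $p_j$; since $q-p_k=\tfrac12(p_j-p_k)$, the replacement halves the lattice length of the segment joining the two colliding witnesses, and iterating eliminates the collision between $j$ and $k$ after finitely many steps. If instead $q\notin C_k$ as well, I would merge the two offending sets: $\mathcal C'=(\mathcal C\setminus\{C_j,C_k\})\cup\{C_j\cap C_k\}$ has $n-1$ members and the same integer-point-free intersection, and it is again critical — the witness for omitting $C_i$ ($i\ne j,k$) is still $p_i$, and $q$ witnesses the omission of $C_j\cap C_k$ — so $n$ has decreased.

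The main obstacle is weaving these two moves into a single terminating argument: the replacement $p_j\mapsto q$ can manufacture fresh parity collisions with other witnesses, so one needs a genuine monovariant — for instance, fixing the tuple $(p_1,\dots,p_n)$ that minimises $\sum_{i<j}\|p_i-p_j\|^2$ over all legal tuples and showing that no residue collision can survive at the minimiser — while also disposing separately of the degenerate case $\bigcap\mathcal C=\emptyset$, where classical Helly's theorem already supplies an empty subfamily of size $d+1\le 2^d$. Pinning down a potential under which both the halving step and the merge step are simultaneously non-increasing, and checking that together they rule out $n=2^d+1$ rather than merely bounding $n$ by something larger, is where the real care is needed.
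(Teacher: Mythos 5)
The paper never proves Doignon's theorem; it is quoted as a classical result of Doignon, Bell, and Scarf, so there is no internal proof to compare against and your attempt must stand on its own. Your opening moves (reduction to a critical subfamily, witnesses $p_j$, parity pigeonhole, the integral midpoint $q$) are the standard ones, and you have correctly located where the difficulty lives --- but the argument does not close, and the gap is exactly the case you leave open, namely $q \notin C_j$ \emph{and} $q \notin C_k$. The merge move cannot rescue this case: at the decisive size $n = 2^d+1$ it produces a critical family of size $2^d$, which is no contradiction whatsoever, since such families exist --- take $C_v = \conv\big(\{0,1\}^d \setminus \{v\}\big)$ for each vertex $v$ of the unit cube; every $2^d-1$ of these contain the omitted vertex, while their common intersection contains the centre of the cube but no lattice point. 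The replacement move has a separate defect: your proposed monovariant $\sum_{i<j}\|p_i-p_j\|^2$ need not decrease when $p_j$ is replaced by the midpoint of $p_j$ and $p_k$ (a witness $p_i$ near $p_j$ but far from $p_k$ moves \emph{away} from the new witness). A potential that does behave under replacement is the number of lattice points of $\conv(p_1,\dots,p_n)$: each $p_i$ is necessarily a vertex of that hull (otherwise $p_i \in \conv(\{p_l : l\neq i\}) \subseteq C_i$ would put a lattice point in $\bigcap\mathcal C$), and swapping $p_j$ for $q$ shrinks the hull and deletes the vertex $p_j$. But at a minimiser of that potential the replacement move is by definition unavailable, so every surviving parity collision lands in the unresolved two-sided case. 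In short: the two moves you have do not combine into a proof, and the missing case is not a technicality.

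It is worth seeing why the parity idea feels so close to sufficient: it \emph{does} prove, in two lines, that an empty subset of $\Z^d$ has at most $2^d$ points (two points of the same parity class have an integral midpoint that is not a vertex of the hull). What your argument is missing is the bridge from ``critical family of convex sets'' to ``empty polytope,'' i.e.\ the fact that the witnesses can be chosen so that $\conv(p_1,\dots,p_n)$ contains no lattice points other than the $p_i$ themselves. That bridge is precisely Hoffman's theorem, which the paper states immediately after Doignon's; it is a genuine theorem with its own nontrivial proof, not something recoverable by iterating the midpoint substitution. If you want a self-contained route, either prove that reduction carefully, or work with the polyhedral formulation (Bell's argument, choosing each witness to minimise the violation of its omitted constraint) --- but be prepared for the two-sided violation case to require a real additional idea rather than a cleverer potential function.
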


This theorem was also independently discovered by Bell \cite{Bell1977} and Scarf \cite{Scarf1977}. A few years later, Hoffman extended Bell's result by connecting Helly-type theorems with polytopes.

Given a set $S\subseteq\mathbb{R}^d,$ the $\emph{Helly number}$ of $S,$ denoted $h(S),$ is the smallest $h$ such that the following $\emph{Helly-type theorem}$ holds:
\begin{quote}
    Let $\mathcal{F}$ be a finite family of convex sets in $\mathbb{R}^d.$ If every $h$ or fewer sets in $\mathcal{F}$ contains a point of $S$ in their intersection, then the intersection of all sets in $\mathcal{F}$ contains a point in $S.$
\end{quote}
If no such $h$ exists, we say $h(S)=\infty.$ Helly's theorem says that $h(\R^d) \leq d+1$ (while the collection of facets of a simplex shows that $h(\R^d) \geq d+1)$, and Doignon's theorem says that $h(\Z^d) \leq 2^d$.

Moreover, a subset $T\subseteq S$ is called  \emph{empty in $S$} if $\conv(T)\cap S$ only contains vertices of $\conv(T)$. We call a set $S$ \emph{discrete} if it has no accumulation points.

\begin{thm*}[Hoffman \cite{Hoffman1979}]
    If $S\in\mathbb{R}^d$ and $S$ is discrete, then $h(S)$ is equal to the maximum size of an empty subset of $S$.
\end{thm*}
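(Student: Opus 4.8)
The plan is to treat the two inequalities $h(S)\le m$ and $h(S)\ge m$ separately, where $m$ denotes the maximum size of an empty subset of $S$ (read as $\infty$ if there are empty subsets of every size). Call a finite family $\mathcal G$ of convex sets \emph{bad} if $\bigcap\mathcal G$ contains no point of $S$. Unwinding the definition, $h(S)\le h$ holds precisely when every bad family has a bad subfamily of at most $h$ members; since a bad family has no proper bad subfamily exactly when it is inclusion-minimal, $h(S)$ equals the supremum of $|\mathcal G|$ over inclusion-minimal bad families $\mathcal G$. Thus both directions amount to a size-preserving correspondence between minimal bad families and empty subsets.

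For $h(S)\ge m$, start from an empty subset $T=\{t_1,\dots,t_m\}$. Since $T$ is empty, $\conv(T)\cap S=T$ and every $t_i$ is a vertex of $P:=\conv(T)$, so strict separation of $t_i$ from $\conv(T\setminus\{t_i\})$ yields a closed halfspace $K_i$ with $\conv(T\setminus\{t_i\})\subseteq K_i$ and $t_i\notin K_i$. Discreteness of $S$ lets us fix $\varepsilon>0$ so that the closed $\varepsilon$-neighbourhood $P_\varepsilon$ of $P$ satisfies $S\cap P_\varepsilon=T$. Then $\{K_1\cap P_\varepsilon,\dots,K_m\cap P_\varepsilon\}$ is bad — its intersection lies in $P_\varepsilon$, hence meets $S$ only in $T$, yet it omits every $t_i$ — while deleting the $j$th set leaves $t_j$ in the intersection. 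This is a minimal bad family of $m$ sets, so $h(S)\ge m$.

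For $h(S)\le m$, let $\mathcal G=\{G_1,\dots,G_k\}$ be inclusion-minimal bad; I must show $k\le m$. Minimality provides, for each $i$, a witness $s_i\in S\cap\bigcap_{j\ne i}G_j$; then the $s_i$ are distinct, $s_i\notin G_i$, and since $\conv\{s_j:j\ne i\}\subseteq G_i$ the set $T_0:=\{s_1,\dots,s_k\}$ is in convex position. Replacing $S$ by the finite set $S\cap\conv(T_0)$ alters neither the minimality and badness of $\mathcal G$ nor which subsets of $\conv(T_0)$ are empty, so we may assume $S$ finite. Now iterate the following reduction, starting from $U=T_0$: while the current set $U$ (always in convex position, always of size $k$) is not empty, choose a point $p\in S\cap\conv(U)$ that is not a vertex of $\conv(U)$, choose a vertex $u$ of $\conv(U)$ with $p\notin\conv(U\setminus\{u\})$, and pass to $(U\setminus\{u\})\cup\{p\}$. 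The new set is again in convex position of size $k$ — the retained vertices stay extreme in the smaller hull, $p$ is a vertex because it lies outside $\conv(U\setminus\{u\})$, and $p\notin U$ since $p$ is not a vertex of $\conv(U)$ — and its hull loses the point $u\in S$, so $|\conv(U)\cap S|$ strictly decreases at every step. Hence the process halts, necessarily at an empty set of size $k$, proving $k\le m$.

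It remains to justify that a legal pair $(p,u)$ exists whenever $U$ is not empty. I maintain the invariant that $U$ is a \emph{witness configuration} for some bad family $\mathcal H=\{H_1,\dots,H_k\}$ — meaning $u_i\notin H_i$ and $u_i\in H_j$ for $j\ne i$ — which holds initially with $\mathcal H=\mathcal G$. Given a non-vertex point $p\in S\cap\conv(U)$, badness forces $p\notin H_{i_0}$ for some $i_0$, and then $\conv(U\setminus\{u_{i_0}\})\subseteq H_{i_0}$ makes $(p,u_{i_0})$ legal. The subtle point is re-establishing the invariant for the new set: if $p$ lies outside exactly one member $H_{i_0}$ of $\mathcal H$, one may simply replace $H_{i_0}$ by the polytope $\conv(U\setminus\{u_{i_0}\})$, but when $p$ lies outside several members a more careful choice (of $p$, or of the replacement family) is needed. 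Controlling this situation — equivalently, ruling out that the reduction stalls at a non-empty $U$ all of whose interior points of $S$ are too "central" to be swapped in — is where the fine structure of the minimal bad family $\mathcal G$ enters, and I expect it to be the main obstacle.
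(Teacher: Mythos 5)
The paper does not actually prove this statement---it is imported verbatim from Hoffman's 1979 paper---so there is no in-text argument to compare yours against; I am assessing your proposal on its own terms. Your reformulation of $h(S)$ as the supremum of the sizes of inclusion-minimal bad families is correct, and your proof of $h(S)\ge m$ is complete: strictly separating each vertex $t_i$ of an empty polytope from $\conv(T\setminus\{t_i\})$ and intersecting the resulting halfspaces with a compact neighbourhood $P_\varepsilon$ satisfying $S\cap P_\varepsilon=T$ (which discreteness supplies) does yield a minimal bad family of size $m$. This is the standard construction for the easy direction.

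The direction $h(S)\le m$, however, contains a genuine gap, and to your credit you have located it exactly: the proof never establishes that a legal swap exists at every step, because the witness-configuration invariant is not shown to survive a swap. Concretely, after replacing $u_{i_0}$ by $p$, the only available candidate for a bad family certifying the new configuration $U'$ is $\mathcal H'=\{\conv(U'\setminus\{u'\}):u'\in U'\}$, and this family need not be bad: since $p$ may lie outside several of the sets $\conv(U\setminus\{u_j\})$, the new set $\conv(U'\setminus\{u_j\})$ is not contained in the old one, and $\bigcap_{u'\in U'}\conv(U'\setminus\{u'\})$ can acquire a point $q\in S$ that the old intersection avoided. Already for $k=4$ in the plane this happens: take $U$ a convex quadrilateral whose diagonal intersection point is not in $S$ (so $\{\conv(U\setminus\{u\})\}$ is a minimal bad family with witnesses $U$), and let $p\in S$ be an interior point; the swapped quadrilateral $U'$ has a different diagonal intersection point, which may perfectly well belong to $S$. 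When that occurs, the next iteration has no certificate that some non-vertex point of $S\cap\conv(U')$ lies outside some $\conv(U'\setminus\{u'\})$, so the process can stall at a non-empty $U'$, and the argument collapses. Repairing this requires a genuinely new idea---for instance a global extremal choice of the configuration together with an analysis of the offending point $q$---and that repair is precisely the nontrivial content of Hoffman's theorem. As written, the proposal establishes only the inequality $h(S)\ge m$.
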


Hoffman actually proved a version of this result that holds for any subset $S \subseteq \R^d$, but we won't need that more general result. Since the unit hypercube $\{0,1\}^d$ is empty in $\Z^d$, we have $h(\Z^d) \geq 2^d$; this shows that Doignon's theorem is tight. Further works on Helly numbers have considered mixed integer subsets $\Z^r \times \R^{d-r}$ \cite{averkov-mixed-integer}, algebraic sets \cite{Helly-survey}, or unions of lattices \cite{garber2023helly}.

In the first part of this paper, we study Helly numbers of product sets, or sets of the form $S = A^d$, for some $A \subseteq \R$. Dillon \cite{Dillon} proved a general criterion on product sets which implies in particular that if $p$ is a polynomial with degree at least $2$ and $A=\{p(n):n\in\mathbb{Z}\},$ then $h(A^2)=\infty$ for all $d\ge 2.$ 
Based on this result, one might conjecture that the finiteness of $h(A^2)$ is related to the sparseness of the set $A$. However, this is ruled out in \cite{Dillon} by the construction of a set $A\subseteq\mathbb{Z}$ whose consecutive elements differ by at most $2$ (a ``$2$-syndetic'' set) such that the $A^2$ has arbitrarily large empty polygons, and thus arbitrarily large Helly number. In Section \ref{infEmpty}, we strengthen this result by constructing a $2$-syndetic set containing an infinite set of vertices in convex position whose convex hull is empty.

Dillon's method, however, gives no information for \emph{exponential lattices}: sets of the form $L_d(\alpha)=\{\alpha^n: n\in\mathbb{N}_0\}^d$. Ambrus, Balko, Frankl, Jung, and Naszódi \cite{Ambrus} studied these sets in two dimensions, and they obtained upper bounds for all $\alpha>1$ and exact values for $\alpha\in[\tfrac{1+\sqrt{5}}{2},\infty).$
\begin{thm*}[Ambrus, Balko, Frankl, Jung, Naszódi \cite{Ambrus}]
Let $\alpha>1.$
    \begin{itemize}
        \item If $\alpha\ge 2,$ then $h(L_2(\alpha))= 5.$
        \item If $\alpha\in[\tfrac{1+\sqrt{5}}{2},2),$ then $h(L_2(\alpha))= 7.$
        \item If $\alpha\in(1,\tfrac{1+\sqrt{5}}{2}),$ then $h(L_2(\alpha))\le 3\lceil\log_{\alpha}(\tfrac{\alpha}{\alpha-1})\rceil+3.$
    \end{itemize}
\end{thm*}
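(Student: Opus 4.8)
The plan is to pass through Hoffman's theorem: since $L_2(\alpha)=\{(\alpha^i,\alpha^j):i,j\in\mathbb{N}_0\}$ is discrete, $h(L_2(\alpha))$ equals the largest number of vertices of a convex polygon $P$ that is empty in $L_2(\alpha)$. Because each diagonal map $(x,y)\mapsto(\alpha^{-k}x,\alpha^{-\ell}y)$ is a linear bijection carrying $L_2(\alpha)$ to itself whenever $k,\ell$ do not exceed the smallest exponents occurring among the vertices of $P$, we may assume $P$ has a vertex on the line $x=1$ and a vertex on the line $y=1$. Two elementary structural facts get used repeatedly: (i) $P$ has at most one horizontal edge along the top, at most one along the bottom, at most one vertical edge on each side, and every axis-parallel edge joins two lattice points consecutive in a coordinate (a longer axis-parallel edge would contain a further point of $L_2(\alpha)$); and (ii) the \emph{width estimate}: if the line $y=\alpha^j$ meets the interior of $P$, then the endpoints of the chord $P\cap\{y=\alpha^j\}$ have $x$-coordinates differing by a factor of at most $\alpha$ (otherwise some $(\alpha^i,\alpha^j)$ lies strictly inside $P$), and symmetrically for vertical chords. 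Since a convex polygon meets any vertical line in a segment, each lattice column carries at most two vertices, so $P$ has at most $2(M+1)$ vertices if it occupies columns $0,\dots,M$, and likewise with rows.

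For the lower bounds it suffices to exhibit empty polygons. The pentagon with vertices $(1,\alpha^2),(\alpha,\alpha),(\alpha^2,1),(\alpha^2,\alpha),(\alpha,\alpha^2)$ is convex and empty for every $\alpha>1$: the only points of $L_2(\alpha)$ in its bounding box $[1,\alpha^2]^2$ are these five together with $(1,1),(1,\alpha),(\alpha,1),(\alpha^2,\alpha^2)$, and a one-line computation (using only $\alpha>1$) separates each of the latter four from $P$ by a bounding edge; hence $h(L_2(\alpha))\ge5$ throughout. For $\alpha<2$ one similarly writes down an empty heptagon supported on the first few columns and rows, so $h(L_2(\alpha))\ge7$ there; the hypothesis $\alpha\ge\tfrac{1+\sqrt5}{2}$ enters only in the matching upper bound.

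For the upper bounds, the regime $\alpha\in(1,\tfrac{1+\sqrt5}{2})$ is handled by controlling the span. Writing $k^\ast=\lceil\log_\alpha\tfrac{\alpha}{\alpha-1}\rceil$, one checks that $k^\ast$ is the least integer $k$ with $\alpha^k-\alpha^{k-1}\ge1$; combining this with the normalization (a vertex on $x=1$ and one on $y=1$), the width estimate, and convexity shows that the columns occupied by $P$ form an interval of length $O(k^\ast)$, and a count over the at most two vertices per column yields a bound of the form $c_1\lceil\log_\alpha\tfrac{\alpha}{\alpha-1}\rceil+c_2$, which a little care brings to $3\lceil\log_\alpha\tfrac{\alpha}{\alpha-1}\rceil+3$. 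The regime $\alpha\ge\tfrac{1+\sqrt5}{2}$ is a finite case analysis: one must rule out an empty hexagon when $\alpha\ge2$ and an empty octagon when $\alpha\in[\tfrac{1+\sqrt5}{2},2)$. The thresholds surface as explicit polynomial inequalities in $\alpha$. For $\alpha\ge2$ the midpoint $\tfrac12(\alpha^a+\alpha^c)$ of any two distinct lattice abscissae ($a<c$) lies in $[\alpha^{c-1},\alpha^c)$; applied to pairs of vertices this pins configurations down tightly enough that six vertices always enclose another point of $L_2(\alpha)$. The value $\tfrac{1+\sqrt5}{2}$ is the positive root of $\alpha^2=\alpha+1$, and this is precisely the identity that decides, for three consecutive vertices of $P$ whose exponents are nearly consecutive, whether the point of $L_2(\alpha)$ with the intermediate exponents falls inside the triangle they span — for instance, for vertices $(\alpha^a,\alpha^b),(\alpha^{a+1},\alpha^{b-1}),(\alpha^{a+2},\alpha^{b-2})$ on a staircase, whether $(\alpha^{a+2},\alpha^{b-1})$ is captured reduces to the sign of $(\alpha-1)(\alpha^2-\alpha-1)$ — so for $\alpha\ge\tfrac{1+\sqrt5}{2}$ this kills the extra two vertices.

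The crux is this last case analysis. One has to enumerate the ways six (respectively eight) vertices can be split among the monotone staircase arcs of $\partial P$ — a bounded list, by fact (i) and the width estimate — and for each configuration extract the exact inequality in $\alpha$ that forces a forbidden lattice point; isolating the genuinely tight configurations and verifying that they break precisely at $2$ and at $\tfrac{1+\sqrt5}{2}$ is where the difficulty lies. The Hoffman reduction, the normalization, the explicit extremal polygons, the width estimate, and the span argument near $1$ are all comparatively routine.
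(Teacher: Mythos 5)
First, a point of reference: this statement is not proved in the paper at all. It is quoted from Ambrus, Balko, Frankl, Jung, and Nasz\'odi \cite{Ambrus} as background, and the paper's own contribution on this front, Theorem~\ref{expLatStronger}, is a \emph{stronger} upper bound $2\lceil\log_\alpha(\tfrac{\alpha}{\alpha-1})\rceil+3$ obtained by a different, more global argument: one fixes two vertices $a,b$ spanning a chord of negative slope, chosen extremally, and traps every other vertex either in a small pentagon below $\overline{ab}$ or in a small triangle (plus two exceptional lattice points) above it, the pentagon's size being controlled by $\lceil\log_\alpha(\tfrac{\alpha}{\alpha-1})\rceil$. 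Your outline instead follows the structure of the original proof in \cite{Ambrus} (monotone boundary arcs, per-slope-class edge counts, a width estimate, and a terminal case analysis), so the right comparison is with that paper rather than with this one.

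Judged as a proof, your proposal has a genuine gap: the steps that actually carry the theorem are asserted rather than executed. The entire content of the first two bullets is the impossibility of an empty hexagon for $\alpha\ge 2$ and of an empty octagon for $\alpha\in[\tfrac{1+\sqrt{5}}{2},2)$, and you explicitly defer this (``isolating the genuinely tight configurations \dots is where the difficulty lies'') without enumerating the configurations or deriving the deciding inequalities. The one sample computation you offer is not right as stated: for the staircase vertices $(\alpha^a,\alpha^b),(\alpha^{a+1},\alpha^{b-1}),(\alpha^{a+2},\alpha^{b-2})$, the point $(\alpha^{a+2},\alpha^{b-1})$ has extremal first coordinate and therefore cannot lie in the triangle those three points span, so the containment you invoke must involve more of the polygon than the three consecutive vertices, and the claimed reduction to the sign of $(\alpha-1)(\alpha^2-\alpha-1)$ is unsubstantiated. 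Likewise, the empty heptagon needed for the lower bound $h(L_2(\alpha))\ge 7$ when $\alpha<2$ is never exhibited, and the passage from ``the occupied columns form an interval of length $O(k^\ast)$'' to the precise bound $3\lceil\log_\alpha\tfrac{\alpha}{\alpha-1}\rceil+3$ is waved through with ``a little care.'' The parts you do carry out --- the Hoffman reduction, the diagonal normalization, the empty pentagon (which is correct and valid for all $\alpha>1$), the width estimate, and the identification of $k^\ast$ as the least $k$ with $\alpha^k-\alpha^{k-1}\ge 1$ --- are fine, but they are the routine scaffolding; as written this is a plausible roadmap to the theorem of \cite{Ambrus}, not a proof of it.
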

In Section \ref{explatUpper}, we obtain a stronger bound, with an approach that is shorter and more geometric:
\begin{theorem}
\label{expLatStronger}\vspace{-0.5\baselineskip}
For $\alpha>1,$ \[h(L_2(\alpha))\le 2\left\lceil\log_{\alpha}\left(\frac{\alpha}{\alpha-1}\right)\right\rceil+3.\]
\end{theorem}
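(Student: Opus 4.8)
The plan is to invoke Hoffman's theorem, which reduces the problem to bounding $|T|$ for an \emph{empty polygon}, i.e.\ a polygon $P=\conv(T)$ with $T\subseteq L_2(\alpha)$ and $P\cap L_2(\alpha)=T$. Set $N:=\lceil\log_\alpha(\alpha/(\alpha-1))\rceil$; a short computation shows $N$ is also the least positive integer with $\alpha^{N}-\alpha^{N-1}\ge 1$, i.e.\ the first power-gap $\alpha^{m+1}-\alpha^m$ that is at least $1$ occurs at $m=N-1$. First I would record two easy structural facts. Applying $\log_\alpha$ to each coordinate identifies $L_2(\alpha)$ with $\mathbb{N}_0^2$; although this map destroys convexity, it is strictly increasing in each coordinate and hence preserves the cyclic order of the vertices of $P$, so the lower hull and the upper hull of $P$ each pass through strictly increasing $x$-exponents (and likewise for the four monotone boundary arcs cut off at the extreme vertices $v_W,v_E,v_S,v_N$). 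Also, since dividing the $x$-coordinate of every lattice point by $\alpha$ carries an empty polygon all of whose vertices have $x$-coordinate at least $\alpha$ to an empty polygon (no lattice point is created, as $P\subseteq[1,\infty)^2$), we may normalize so that $0$ is the smallest $x$-exponent and also the smallest $y$-exponent among the vertices.

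The argument rests on two geometric inputs. The first is a \emph{fiber bound}: for every integer $k$ strictly between the smallest and largest $x$-exponents of vertices, the fiber $P\cap\{x=\alpha^k\}$ is a nondegenerate vertical segment whose relative interior lies in the interior of $P$ (a convex body cannot pinch to a point on an interior slab), and emptiness forces every power of $\alpha$ in this fiber to be a vertex; since a convex polygon has no vertical edge at an interior $x$-value and the fiber's relative interior avoids $L_2(\alpha)$, the fiber contains at most two powers of $\alpha$, consecutive ones $\alpha^j,\alpha^{j+1}$ if two. Summing over all $x$-exponents via the hull decomposition gives $|T|\le 2W_x+2$, where $W_x$ is the span of the $x$-exponents of vertices, with a saving of one for each of the leftmost and rightmost vertices that is unique rather than an endpoint of a vertical edge. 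The second is a \emph{spread bound}: $W_x\le N+1$ (and, by the $x\leftrightarrow y$ symmetry of $L_2(\alpha)$, also $W_y\le N+1$). Otherwise, picking three consecutive vertices $U,V,W$ on a suitable monotone arc and forming the lattice point $Z$ whose $x$-exponent equals the extreme $x$-exponent of the triple and whose $y$-exponent equals the extreme $y$-exponent of the triple, one uses the convexity of the relevant hull together with the inequality $\alpha^{N-1}(\alpha-1)\ge 1$ to show $Z$ lies in the interior of $P$ without being a vertex, contradicting emptiness. Granting the spread bound, $W_x\le N+1$; and when $W_x=N+1$ a short extra argument rules out $P$ having vertical edges on both the left and the right, so the fiber bound gives $|T|\le 2(N+1)+2-1=2N+3$, which is the claimed bound on $h(L_2(\alpha))$.

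The main obstacle is the spread bound. The point $Z$ always lies in the bounding box of $U,V,W$ but usually not in the triangle $UVW$, so it must be tested against the \emph{opposite} hull of $P$; bounding that hull in the right direction — for instance, estimating the upper boundary of $P$ from below at the $x$-coordinate of $Z$ using concavity and the position of $v_N$ — forces a case analysis driven by the relative positions of $v_S,v_N$ and of $v_W,v_E$, positions that cannot be normalized away because $L_2(\alpha)$, being contained in the open positive quadrant, admits no reflection symmetries. The delicate part is to arrange these cases so that exactly the critical power-gap inequality $\alpha^{N-1}(\alpha-1)\ge 1$ is used, rather than a weaker estimate with a worse constant; this is the source of the improvement from coefficient $3$ to coefficient $2$ over the bound of Ambrus, Balko, Frankl, Jung, and Naszódi. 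Once the spread bound is in hand, the remaining bookkeeping, including the vertical-edge subtleties, is routine.
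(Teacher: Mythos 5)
Your ``fiber bound'' is sound: emptiness does force each vertical line $x=\alpha^k$ at an interior $x$-value to meet $P\cap L_2(\alpha)$ in at most two, necessarily consecutive, powers of $\alpha$. But the \emph{spread bound} $W_x\le N+1$, which carries your entire argument, is false. Take $\alpha=2$, so $N=\lceil\log_2 2\rceil=1$, and consider the pentagon with vertices $a=(1,2^{10})$, $g=(2,2^{10})$, $c=(2^9,2^9)$, $h=(2^{10},2)$, $b=(2^{10},1)$. Its facet inequalities are $x+y\le 1026$, $512x+511y\ge 523776$, $511x+512y\ge 523776$ (plus $x\le 1024$, $y\le 1024$); adding the two lower-hull inequalities forces $x+y\ge 1024$ for any point of the pentagon, and the only pairs of powers of $2$ with sum in $[1024,1026]$ are exactly the five vertices. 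So this pentagon is empty, yet its $x$-exponents span $0$ through $10$, i.e.\ $W_x=10\gg N+1=2$; the gap $r-p$ can be made arbitrarily large. (This is not a pathological example: the paper's characterization of maximal empty polygons for $\alpha\ge 2$ shows \emph{every} empty pentagon has this shape, with the anti-diagonal vertices $(\alpha^p,\alpha^q)$ and $(\alpha^r,\alpha^s)$ at unbounded exponent distance.) Your proposed proof of the spread bound also visibly fails here: the only monotone arc with three vertices is $b,c,a$, and the corner lattice points $(1,1)$ and $(2^{10},2^{10})$ formed from the extreme exponents of that triple lie far outside $P$, so no contradiction with emptiness arises.

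The correct mechanism is not that the exponents have bounded range but that the vertices are confined to a small neighborhood of a single negative-slope chord. The paper picks extreme vertices $a=(\alpha^p,\alpha^q)$ and $b=(\alpha^r,\alpha^s)$ with $p<r$, $q>s$ and no vertices above-left of $a$ or below-right of $b$, and then shows that every other vertex is either one of the two neighbors $g,h$ of $a,b$, or lies in a small region anchored at $c=(\alpha^{r-1},\alpha^{q-1})$ whose side lengths in exponent units are integers $i,j<\log_\alpha\bigl(\frac{\alpha}{\alpha-1}\bigr)$; only inside that window is a fiber-type count (no two vertices on the same side of $\overline{ab}$ share an $x$- or $y$-coordinate) applied. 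So the logarithmic quantity bounds the size of a window near $\overline{ab}$, not the global exponent spread, and your outline would need to be rebuilt around such a localization statement. Separately, your sketch does not cover the case in which all edges of $P$ have nonnegative slope (there is then no chord $\overline{ab}$ of the required type); the paper handles that case by citing Ambrus et al.
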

This inequality recovers the exact values of $h(L_2(\alpha))$ for $\alpha>\tfrac{1+\sqrt{5}}{2}.$ Our method of proof additionally allows us to fully characterize all maximal empty polygons when $\alpha\ge 2.$ 

Ambrus et al.~\cite{Ambrus} also proved a lower bound for $h(L_2(\alpha))$ by constructing an empty polygon with vertices on a hyperbola. They showed that 
for $\alpha>1,$ we have $h(L_2(\alpha))\ge\sqrt{\frac{1}{\alpha-1}}$. In Section~\ref{explatLower}, we extend this bound to all dimensions:
\begin{theorem}
    \label{expLatticeBound3d}
    For $\alpha>1$ and $d>1,$\vspace{-0.25\baselineskip}
    \[h(L_d(\alpha))\ge\binom{k+d-1}{d-1},\vspace{-0.25\baselineskip}\]
    where $k=\left\lfloor\sqrt{\tfrac{1}{\alpha-1}}\,\right\rfloor.$
\end{theorem}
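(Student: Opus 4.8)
The plan is to construct, with $m := k$, an explicit empty subset of $L_d(\alpha)$ of size $\binom{m+d-1}{d-1}$; since $L_d(\alpha)$ is discrete, Hoffman's theorem then gives $h(L_d(\alpha)) \geq \binom{k+d-1}{d-1}$. The construction generalises the hyperbola example of Ambrus et al.\ to the hypersurface $\Sigma := \{x \in \R^d_{>0} : x_1 \cdots x_d = \alpha^m\}$: put $\Delta := \{a \in \Z^d_{\geq 0} : a_1 + \dots + a_d = m\}$ and $T := \{(\alpha^{a_1}, \dots, \alpha^{a_d}) : a \in \Delta\} \subseteq \Sigma$, so that $|T| = |\Delta| = \binom{m+d-1}{d-1}$. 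I would then show $T$ is empty in $L_d(\alpha)$, that is, every point of $T$ is a vertex of $\conv(T)$ and $\conv(T) \cap L_d(\alpha) = T$.

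The first point is that $\Sigma$ is the boundary of the convex body $K := \{x \in \R^d_{>0} : x_1 \cdots x_d \geq \alpha^m\}$, a sublevel set of the strictly convex function $x \mapsto -\sum_i \log x_i$ (its Hessian $\operatorname{diag}(x_i^{-2})$ is positive definite). Strict convexity makes every boundary point of $K$ an extreme point of $K$, hence of the subset $\conv(T) \subseteq K$, so $T$ is in convex position. Since also $\conv(T) \subseteq K$, any $q = (\alpha^{b_1}, \dots, \alpha^{b_d}) \in L_d(\alpha)$ (so $b \in \Z^d_{\geq 0}$) lying in $\conv(T)$ satisfies $\alpha^{b_1 + \dots + b_d} = \prod_i \alpha^{b_i} \geq \alpha^m$, i.e.\ $b_1 + \dots + b_d \geq m$; and if equality holds then $q \in T$.

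It remains to rule out $b_1 + \dots + b_d \geq m+1$, via a second, affine constraint. Since $a \mapsto \sum_i \alpha^{a_i}$ is convex, it is at most $(d-1) + \alpha^m$ on the simplex $\{a \in \R^d_{\geq 0} : \sum_i a_i = m\}$ (a convex function on a simplex is maximised at a vertex, here $m e_i$), so $\conv(T)$ lies in the halfspace $\{\sum_i x_i \leq (d-1) + \alpha^m\}$. On the other hand, Bernoulli's inequality at each nonnegative integer $b_i$ gives $\alpha^{b_i} \geq 1 + b_i(\alpha - 1)$, hence $\sum_i \alpha^{b_i} \geq d + (b_1 + \dots + b_d)(\alpha-1) \geq d + (m+1)(\alpha-1)$. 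Comparing the two bounds, a point $q$ with exponent-sum at least $m+1$ can lie in $\conv(T)$ only if $(m+1)(\alpha - 1) \leq \alpha^m - 1$. So the entire argument reduces to the elementary inequality
\[
(m+1)(\alpha - 1) > \alpha^m - 1 \qquad \text{for } 1 < \alpha \leq 1 + \tfrac{1}{m^2},
\]
which is exactly the range forced by $m = k = \lfloor \sqrt{1/(\alpha-1)} \rfloor$.

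Finally I would verify this inequality: with $t := \alpha - 1$ and $\alpha^m = (1+t)^m$, one has $(m+1)t - (\alpha^m - 1) = t\bigl(1 - \sum_{l=2}^{m} \binom{m}{l} t^{l-1}\bigr)$, and since $\binom{m}{l} t^{l-1} \leq \tfrac{m^l}{l!}\, m^{-2(l-1)} = \tfrac{m^{2-l}}{l!} \leq \tfrac{1}{l!}$ for $l \geq 2$, the subtracted sum is at most $e - 2 < 1$. I expect this step to be the only delicate one: the inequality is nearly tight — for $\alpha$ slightly below $1 + 1/m^2$ the two sides are very close — so the crude bound $\alpha^m \leq e^{m(\alpha-1)}$ does not suffice, and this near-tightness is precisely why the theorem is stated with $k = \lfloor\sqrt{1/(\alpha-1)}\rfloor$. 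The remaining ingredients are routine; one should also note the trivial case $k = 0$ (i.e.\ $\alpha > 2$), where $T$ is a single point, and that Bernoulli is applied only at integers, where $\alpha^b \geq 1 + b(\alpha-1)$ genuinely holds.
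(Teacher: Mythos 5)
Your proposal is correct and follows essentially the same route as the paper's proof: the same vertex set $T$ of lattice points on the surface $\prod_i x_i = \alpha^k$, the same supporting halfspace $\sum_i x_i \leq \alpha^k + d - 1$ coming from the ``corner'' vertices, and a reduction to essentially the same elementary inequality $(k+1)(\alpha-1) > \alpha^k - 1$. Your binomial-expansion verification of that final inequality (using only $\alpha - 1 \leq 1/k^2$ and $\sum_{l\geq 2} 1/l! = e-2 < 1$) is if anything more careful than the paper's, which substitutes $\alpha = 1+s^2$, treats $k$ as exactly $1/s$ rather than its floor, and reduces to $s^2+s+1 \geq e^s$.
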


This result is nontrivial when $\alpha<2,$ and is strongest when $\alpha$ is close to $1.$

In Section~\ref{acong}, we investigate Helly numbers of powers of arithmetic congruence sets, which are sets of the form $A= S + m\Z$ for some subset $S \subseteq \{0,1,2\dots,m\}$. This problem falls under the scope of two previous results:
\begin{itemize}
    \item De Loera, La Haye, Oliveros, and Rold\'an-Pensado \cite{DeLoera} showed that $h(S)\leq 3^{3\ell} 2^d$ if $S=\mathbb{Z}\backslash (L_1\cup L_2\cup\dots L_{\ell})$, where $L_1,\dots,L_{\ell}$ are sublattices of $\mathbb{Z}^d.$ If $A = S + m\Z$, then $A^d$ is formed by removing exactly $m^d-|S|^d$ sublattices of $\mathbb{Z}^d$, so $h(A^d)\leq 3^{3(m^d - |S|^d)} 2^d.$
    \item Garber \cite{garber2023helly} showed that $h(S) \leq k2^d$ if $S$ is the union of $k$ translates of $\Z^d$; if $d=2$, he proved that $h(S)\leq k+6$. If $A = S + m\Z$, then $A^d$ is the union of $|S|^d$ translates of $\Z^d$, so we conclude that $h(A^2) \leq |S|^2 + 6$ and $h(A^d) \leq \big(2|S|\big)^d$.
\end{itemize}  
We improve these bounds for certain arithmetic congruence sets.

\begin{restatable}{theorem}{ThmCongruenceSet}\label{aconggeneral}
    Let $m$ be a positive integer with smallest prime factor $p$ and let $k$ and $d$ be positive integers such that $d<\tfrac{p(m-1)}{m(m-k)}.$ If $A = S + m\Z$ where $|S|=k,$ then
    \[
        h(A^d) \le k^d.
    \]
\end{restatable}

This theorem is strongest when $m$ is itself prime, in which case the inequality reduces to $d < (m-1)/(m-k)$, or (after rearranging) $\tfrac{k}{m} > 1-\tfrac{1}{d}(1-\tfrac{1}{m})$. This is stated more cleanly in the following corollary.
\begin{corollary}
    For any prime $p$ and any $\alpha > 1 - \frac{1}{d}\big(1-\frac{1}{p}\big)$, and for any set $S \subseteq \{1,2,\dots,p\}$ with $|S| \geq \alpha p$, define $A = S + p\Z$. In this situation,
    \[
        h(A^d) \leq |S|^d.
    \]
\end{corollary}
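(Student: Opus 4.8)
The plan is to deduce this immediately from Theorem~\ref{aconggeneral} by taking $m = p$. Since $p$ is prime, its smallest prime factor is $p$ itself; writing $k = |S|$ and regarding $S$ as a subset of $\{0,1,\dots,p\}$ (which leaves $A = S + p\Z$ unchanged), the hypothesis $d < \tfrac{p(m-1)}{m(m-k)}$ of Theorem~\ref{aconggeneral} specializes to
\[
    d < \frac{p(p-1)}{p(p-k)} = \frac{p-1}{p-k},
\]
which is meaningful precisely when $k < p$. So the only real work is to verify that the stated lower bound on $\alpha$ forces this inequality, and then to dispatch the boundary case $k = p$ separately.

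For the main case $k < p$: starting from $\alpha > 1 - \tfrac1d\bigl(1 - \tfrac1p\bigr) = 1 - \tfrac{p-1}{dp}$, I would rearrange to $1 - \alpha < \tfrac{p-1}{dp}$ and multiply through by $dp > 0$ to obtain $dp(1-\alpha) < p-1$. Since $|S| \ge \alpha p$, we have $p - k \le p(1-\alpha)$, and multiplying by $d > 0$ gives $d(p-k) \le dp(1-\alpha) < p - 1$. Dividing by $p - k > 0$ yields $d < \tfrac{p-1}{p-k}$, which is exactly the hypothesis of Theorem~\ref{aconggeneral}; that theorem then gives $h(A^d) \le k^d = |S|^d$.

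It remains to handle $k \ge p$. Because $S \subseteq \{1,\dots,p\}$, this forces $k = p$ and $S = \{1,\dots,p\}$, a complete residue system modulo $p$, so $A = \Z$ and $A^d = \Z^d$; then $h(\Z^d) = 2^d \le p^d = |S|^d$ since $p \ge 2$, using Doignon's theorem together with the unit-cube lower bound. I do not anticipate a genuine obstacle: essentially all the content lives in Theorem~\ref{aconggeneral}, and the corollary is just its specialization to a prime modulus, recast in terms of the density $\alpha = |S|/p$. The only points needing care are checking that every quantity multiplied in the algebra is positive in the relevant range (so the inequalities are preserved), and the degenerate case just treated.
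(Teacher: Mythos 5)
Your proposal is correct and matches the paper's intended argument exactly: the corollary is stated as an immediate specialization of Theorem~\ref{aconggeneral} to a prime modulus $m=p$, where the hypothesis reduces to $d<(p-1)/(p-k)$, and your algebra converting the density condition on $\alpha$ into that inequality is right. Your separate treatment of the degenerate case $k=p$ (where $A=\Z$ and Doignon's theorem gives $2^d\le p^d$) is a careful touch the paper leaves implicit.
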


We conclude in \Cref{sec:problems} with several open problems.

\section{Exponential lattices}
\subsection{Upper Bound}\label{explatUpper}
Ambrus, Balko, Frankl, Jung, and Nasz\'odi \cite{Ambrus} establish an upper bound on $h\big(L_2(\alpha)\big)$ by considering an empty polygon and dividing its edges into four types. Then they bound the number of edges in each type by making use of the emptiness property.

We also consider an empty polygon in an exponential lattice, and in one case (where all the edge slopes are nonnegative), we repeat the analysis in \cite{Ambrus}. Our remaining analysis, however, considers a different set of points, which is how we achieve an improved bound with reduced casework.

\begin{lemma}[Lemma 10 and Corollary 11 in \cite{Ambrus}]\label{thm:explat-nonnegative-slope}
    An empty polygon in $L_2(\alpha)$ that contains only edges with nonnegative slope has at most $2\lceil\log_{\alpha}(\frac{\alpha}{\alpha-1})\rceil+2$ edges.
\end{lemma}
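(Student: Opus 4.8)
The plan is to work with an empty polygon $P$ in $L_2(\alpha)$ whose edges all have nonnegative slope, and to extract from it a sequence of lattice points on which the emptiness condition forces rapid "geometric spreading" in both coordinates. Write the vertices of $P$ as $(\alpha^{a_0}, \alpha^{b_0}), (\alpha^{a_1}, \alpha^{b_1}), \dots$ in order around the boundary, starting from the vertex with the smallest $x$-coordinate and moving along the "lower-left to upper-right" chain; since all edge slopes are nonnegative, both the $a_i$ and the $b_i$ are (weakly) monotone along this chain, and in fact strictly monotone because the points are distinct vertices of a convex polygon. So the chain of vertices with nonnegative-slope edges splits $P$'s boundary into (at most) two monotone staircases — an "upper" chain and a "lower" chain — each traversing from the leftmost to the rightmost vertex.

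Next I would bound the length of a single monotone chain. Suppose $(\alpha^{a}, \alpha^{b})$ and $(\alpha^{a'}, \alpha^{b'})$ are two consecutive vertices on, say, the lower chain, with $a < a'$ and $b < b'$. Emptiness means no point of $L_2(\alpha)$ lies on the segment between them or (more usefully) in the relevant region cut off by $P$. The key quantitative step is: if $a' - a$ and $b' - b$ were both too small, one could exhibit a lattice point of $L_2(\alpha)$ that $P$ is forced to contain (either strictly inside $P$, using convexity against the neighboring vertices, or on the edge itself), contradicting emptiness. Concretely, the point $(\alpha^{a+1}, \alpha^{b'-1})$ or a similar "interpolant" should lie inside $P$ unless $(a'-a) + (b'-b)$ is bounded below by something like $\log_\alpha\!\big(\tfrac{\alpha}{\alpha-1}\big)$ — this is exactly the threshold at which consecutive powers $\alpha^n, \alpha^{n+1}$ are far enough apart that no lattice point is trapped. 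Since $a$ increases from its minimum to its maximum over the chain and likewise for $b$, and each step contributes a bounded decrease in the "remaining room," the number of edges on each chain is at most $\lceil\log_\alpha(\tfrac{\alpha}{\alpha-1})\rceil$; with two chains plus the (at most two) "vertical-ish" extreme edges joining them, this gives the claimed $2\lceil\log_\alpha(\tfrac{\alpha}{\alpha-1})\rceil + 2$.

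The main obstacle is making the "interpolant forces a contained lattice point" step precise: one must choose the candidate lattice point carefully so that it is provably on the correct side of both adjacent edges (using convexity of $P$) and on the correct side of the edge in question, and then translate the geometric containment into the arithmetic inequality on $a'-a$ and $b'-b$. The cleanest route is probably to reduce to the one-dimensional statement that the interval $[\alpha^a, \alpha^{a'}]$ contains at least $a' - a$ points of $\{\alpha^n\}$ and to observe that a lattice point $(\alpha^m, \alpha^n)$ with $a < m < a'$ fails to lie below the chord through the two vertices exactly when the slope bound tied to $\tfrac{\alpha}{\alpha-1}$ is violated; combined with the analogous statement in the $y$-direction, this pins down how much "progress" each edge must make. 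Since this lemma is quoted from \cite{Ambrus}, I would follow their argument here rather than reinvent it, and simply verify that the bound they obtain is as stated.
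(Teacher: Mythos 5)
The paper does not actually prove this lemma: it is imported as a black box from \cite{Ambrus} (``Lemma~10 + Corollary~11''), so your closing decision to ``follow their argument rather than reinvent it'' is exactly what the authors do, and to that extent your proposal matches the paper. Note, though, that the decomposition you sketch is not the one the source uses: the authors remark in their open-problems section that the proof in \cite{Ambrus} ``relies on ordering the edge slopes of a polygon,'' i.e.\ it bounds separately the edges of slope at least $1$ and the edges of slope in $[0,1]$ (each class contributing $\lceil\log_\alpha(\frac{\alpha}{\alpha-1})\rceil+1$, whence the total $2\lceil\log_\alpha(\frac{\alpha}{\alpha-1})\rceil+2$), whereas you split the boundary into an upper and a lower monotone chain.

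Read as a standalone argument, your sketch has a genuine gap exactly where all the content lives: the claim that if the exponent increments $a'-a$ and $b'-b$ of an edge are both too small, then some point of $L_2(\alpha)$ is trapped in the polygon. You name this as ``the main obstacle,'' propose candidate interpolants such as $(\alpha^{a+1},\alpha^{b'-1})$, but never verify that any of them lies in $P$, and the resulting telescoping bound of $\lceil\log_\alpha(\frac{\alpha}{\alpha-1})\rceil$ edges per chain is not established. The final accounting is also shaky. First, in a convex polygon all of whose edges have nonnegative slope, the two monotone chains between the lowest-leftmost and highest-rightmost vertices already exhaust every edge, so it is unclear which two edges your ``+2'' is meant to cover. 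Second, your claim that both exponent sequences are strictly monotone along a chain fails for horizontal edges (slope $0$ is nonnegative) and for vertical ones, so the per-chain count cannot be driven purely by ``strict progress in both coordinates.'' Concretely, for $\alpha\ge 2$ the bound is $4$ and is attained by the cell with vertices $(1,1),(\alpha,1),(\alpha,\alpha),(1,\alpha)$, whose two chains each consist of one horizontal and one vertical edge; your accounting (``at most $\lceil\log_\alpha(\frac{\alpha}{\alpha-1})\rceil=1$ edge per chain plus two connectors'') does not describe how these edges actually distribute. Any correct proof must treat the slope-$0$ and vertical edges explicitly, which is presumably why \cite{Ambrus} organizes the count by slope magnitude rather than by chain.
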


\begin{proof}[Proof of Theorem~\ref{expLatStronger}]
    Consider an empty polygon $\mathcal{P}$ in $L_2(\alpha).$ If all edges of $\mathcal{P}$ have nonnegative slope, then \Cref{thm:explat-nonnegative-slope} is sufficient.
    
    Otherwise, $\mathcal{P}$ contains vertices $a=(\alpha^p,\alpha^q)$ and $b=(\alpha^r,\alpha^s)$ such that $p<r$ and $q>s.$ We may choose $a$ and $b$ so that no vertices are strictly above and to the left of $a$ or strictly below and to the right of $b$. A diagram is shown in Figure~\ref{fig:explat}.
    
    Let $c=(\alpha^{r-1},\alpha^{q-1})$ and let $\ell_1,\ell_2$ be the horizontal and vertical lines passing through $c$ respectively. Let $x$ and $y$ be the intersections of $\overline{ab}$ with $\ell_1$ and $\ell_2$ respectively. We define $i,j \in \mathbb{N}$ such that $d:=(\alpha^{r-1-i},\alpha^{q-1})$ is the rightmost point of $\ell_1 \cap L_2(\alpha)$ strictly to the left of $\overline{ab},$ and $e:=(\alpha^{r-1},\alpha^{q-1-j})$ is the uppermost point of $\ell_2 \cap L_2(\alpha)$ strictly below $\overline{ab}$. It is possible for $c$ to lie below $\overline{ab},$ in which case the points $c,d,e$ coincide; this will not affect our proof.
    
    \begin{figure}
        \centering
        \begin{tikzpicture}[scale=0.9]
            \foreach \n [evaluate=\n as \x using {1.3^\n}] in {1,2,...,8}
                \draw[gray] (\x,0.5) -- (\x,1.3^7);
            \foreach \n [evaluate=\n as \y using {1.3^\n}] in {1,2,...,7}
                \draw[gray] (0.5,\y) -- (1.3^8,\y);
            \foreach \name/\pos/\m/\n in {b/right/8/4, h/right/8/5, c/above right/7/6, e/below right/7/5, f/below left/5/5, d/above left/5/6}
                \draw[fill=black] ({1.3^\m},{1.3^\n}) circle[radius=1.5pt] node[name=\name,inner sep=0pt] {} node[\pos]{$\name$};
            \draw[fill=black] ({1.3^2},{1.3^7}) circle[radius=1.5pt] node[name=a,inner sep=0pt] {} node[above=2pt]{$a$};
            \draw[fill=black] ({1.3^3},{1.3^7}) circle[radius=1.5pt] node[name=g,inner sep=0pt] {} node[above=0.3pt]{$g$};
            \draw (a) -- (b);
            \draw[fill=darkgray!80] ($(b)!{(1.3^8-1.3^7)/(1.3^8-1.3^2)}!(a)$) circle[radius=1.5pt] node[name=y,inner sep=0pt] {} node[above right, darkgray!80]{$y$};
            \draw[fill=darkgray!80] ($(a)!{(1.3^7-1.3^6)/(1.3^7-1.3^4)}!(b)$) circle[radius=1.5pt] node[name=x,inner sep=0pt] {} node[above, darkgray!80]{$x$};
            \draw[fill=darkgray!80] (0.5,1.3^7) circle[radius=1.5pt] node[name=a',inner sep=0pt] {} node[above=2pt, darkgray!80]{$a'$};
            \draw[fill=darkgray!80] (1.3^8,0.5) circle[radius=1.5pt] node[name=b',inner sep=0pt] {} node[right, darkgray!80]{$b'$};
            \node[left] at (0.5,1.3^6) {$\ell_1$};
            \node[below] at (1.3^7,0.5) {$\ell_2$};   
            \draw[dotted] (a') -- (b');
            \draw[fill=darkgray!80] ($(a')!{(1.3^7-1.3^6)/(1.3^7-0.5)}!(b')$) circle[radius=1.5pt] node[name=x',inner sep=0pt] {} node[above,darkgray!80]{$x\rlap{$'$}$};
            \draw[fill=darkgray!80] ($(b')!{(1.3^8-1.3^7)/(1.3^8-0.5)}!(a')$) circle[radius=1.5pt] node[name=y',inner sep=0pt] {} node[right,darkgray!80]{$y'$};
        \end{tikzpicture}
        \caption{The named points in the proof of Theorem~\ref{expLatStronger}.}
        \label{fig:explat}
    \end{figure}
    To bound $i$ and $j,$ let $a'=(0,\alpha^q)$ and $b'=(\alpha^r,0).$ The lines $\overline{a'b'}$ and $\ell_1$ intersect at $x'=(\alpha^r-\alpha^{r-1},\alpha^{q-1}).$ Since $x'$ is strictly to the left of $x$ and the first coordinate of $x$ is less than $\alpha^{r-i}$, we have $\alpha^{r}-\alpha^{r-1}<\alpha^{r-i}$; in other words,
    \[
    i<\log_{\alpha}\left(\frac{\alpha}{\alpha-1}\right).
    \]
    The same bound holds for $j$ by considering the intersection of $\overline{a'b'}$ and $\ell_2.$
    
    Our next goal is to deduce restrictions on the possible locations of the vertices of $\mathcal P$. Let $f$ be the point such that $cde\!f$ is a rectangle. Let $g$ be the lattice point immediately to the right of $a,$ and let $h$ be the lattice point immediately above $b.$ Consider a point $p$ below $\overline{ab}$ lying outside pentagon $xye\!f\!d.$ 
    \begin{itemize}
        \item If $p$ lies non-strictly below and to the left of $d,$ then triangle $pab$ contains $d.$
        \item If $p$ lies non-strictly below and to the left of $e,$ then triangle $pab$ contains $e.$
        \item Otherwise, $p$ lies strictly below and to the right of $b$ or strictly above and to the left of $a,$ violating our assumption.
    \end{itemize}
    Therefore, all vertices of $\mathcal{P}$ below $\overline{ab}$ must lie within $xye\!f\!d.$
    
    Similarly, consider a point $p$ above $\overline{ab}$ lying outside triangle $cxy.$ 
    \begin{itemize}
        \item If $p$ lies on or above $\ell_1,$ then triangle $pab$ contains $g.$
        \item If $p$ lies on or to the the right of $\ell_2,$ then triangle $pab$ contains $h.$
    \end{itemize}
    Thus, all vertices of $\mathcal{P}$ above $\overline{ab}$ must either be $g$ or $h,$ or lie within $cxy.$
    
    To finish, we bound the number of vertices in rectangle $cdf\!e.$ For any given $x$-coordinate, no two vertices with that $x$-coordinate can lie on the same side of $\overline{ab}$ (and similarly for $y$-coordinates). This implies that the number of vertices in rectangle $cd\!f\!e$ is at most
    \[
        \min(i,j)+\min(i+1,j+1)\le 2\left\lceil\log_{\alpha}\left(\frac{\alpha}{\alpha-1}\right)\right\rceil-1.
    \]
    Including the vertices $a,b,g,h$ yields the desired bound.    
\end{proof}

Based on this proof, we can easily characterize all maximal empty polygons for $\alpha\ge 2.$

\begin{corollary}
    If $\alpha\ge 2,$ then all empty pentagons in $L_2(\alpha)$ have vertices of the form $(\alpha^p,\alpha^q),(\alpha^r,\alpha^s),(\alpha^{r-1},\alpha^{q-1}), (\alpha^{p+1},\alpha^q), (\alpha^{r},\alpha^{s+1}),$ where $p<r$ and $q>s.$
\end{corollary}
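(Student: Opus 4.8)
The plan is to revisit the proof of \Cref{expLatStronger} in the case $\alpha \ge 2$ and extract the tightness conditions that must hold when the empty polygon $\mathcal P$ has exactly five vertices. Since $\alpha \ge 2$, we have $\log_\alpha\!\big(\tfrac{\alpha}{\alpha-1}\big) \le 1$, so the bound $2\lceil\log_\alpha(\tfrac{\alpha}{\alpha-1})\rceil + 3$ equals $5$. First I would observe that an empty pentagon cannot have all its edge slopes nonnegative: by \Cref{thm:explat-nonnegative-slope}, such a polygon has at most $2\lceil\log_\alpha(\tfrac{\alpha}{\alpha-1})\rceil + 2 \le 4$ edges, hence at most $4$ vertices. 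So the pentagon must fall into the second case of the proof, giving us vertices $a = (\alpha^p,\alpha^q)$ and $b = (\alpha^r,\alpha^s)$ with $p < r$, $q > s$, chosen so that nothing lies above-left of $a$ or below-right of $b$.

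Next I would track where the five vertices can be, using the containment results established in the proof. All vertices below $\overline{ab}$ lie in the pentagon $xye\!f\!d$; all vertices above $\overline{ab}$ are either $g = (\alpha^{p+1},\alpha^q)$, $h = (\alpha^r,\alpha^{s+1})$, or lie in triangle $cxy$. The key arithmetic input is that when $\alpha \ge 2$, the strict inequality $i < \log_\alpha(\tfrac{\alpha}{\alpha-1}) \le 1$ forces $i = 0$, and likewise $j = 0$; thus $d = (\alpha^{r-1},\alpha^{q-1}) = c = e$, the rectangle $cd\!f\!e$ degenerates to the single point $c$, and the "interior" pentagon $xye\!f\!d$ degenerates to the triangle $xyc$ (with $c$ below $\overline{ab}$, since $d$ lies strictly left of $\overline{ab}$ would require $i \ge 1$). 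So below $\overline{ab}$ the only available lattice point of $L_2(\alpha)$ is $c$ itself, and above $\overline{ab}$ the only candidates outside $\{a,b\}$ are $g$ and $h$ (I should check that triangle $cxy$ contains no lattice point other than possibly $c$ on its boundary — this follows because $c$'s neighbors in both coordinate directions already lie on or beyond $\ell_1,\ell_2$, which is exactly the $i=j=0$ statement). Counting: a pentagon needs five vertices; we have exactly five candidates $a, b, c, g, h$, so all five must be used, and $\mathcal P = \conv\{a,b,c,g,h\}$. Translating back, these are $(\alpha^p,\alpha^q)$, $(\alpha^r,\alpha^s)$, $(\alpha^{r-1},\alpha^{q-1})$, $(\alpha^{p+1},\alpha^q)$, $(\alpha^r,\alpha^{s+1})$, which is the claimed list.

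The main obstacle I anticipate is bookkeeping rather than any genuinely new idea: I must verify that when $i = j = 0$ the degenerate regions really do contain no spurious lattice points, and that the five named points are in convex position (so that they genuinely form a pentagon and none is redundant) — in particular that $c$ lies strictly below $\overline{ab}$ and $g, h$ lie strictly above it, and that $g \ne a$, $h \ne b$, which all follow from $p < r$, $q > s$ together with $\alpha \ge 2$. One subtlety worth stating carefully: I should confirm that $g$ and $h$ are forced to be vertices (not merely allowed), which is immediate once we know the pentagon has five vertices and only five candidate locations. A second, more delicate point is handling the boundary cases of the trichotomy in the proof of \Cref{expLatStronger} when regions degenerate — e.g.\ whether a vertex could sit exactly on $\ell_1$ or $\ell_2$ — but the emptiness argument there (triangle $pab$ would contain $g$ or $h$) already rules these out, so no new work is needed. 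I would also remark that this corollary, combined with the construction in \cite{Ambrus}, shows $h(L_2(\alpha)) = 5$ for $\alpha \ge 2$ and pins down the extremal configuration uniquely up to the choice of the exponents $p < r$ and $q > s$.
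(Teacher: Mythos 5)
Your proposal is correct and follows essentially the same route as the paper's own proof: rule out the all-nonnegative-slope case via \Cref{thm:explat-nonnegative-slope}, then use $\alpha \ge 2 \Rightarrow \log_\alpha\!\big(\tfrac{\alpha}{\alpha-1}\big) \le 1$ to force $i = j = 0$, so $c = d = e$ and the only candidate vertices are $a, b, c, g, h$. Your additional bookkeeping (convex position, degenerate regions) is more detail than the paper records but does not change the argument.
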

\begin{proof}
    \Cref{thm:explat-nonnegative-slope} implies that any empty polygon in which every edge has nonnegative slope has at most 4 vertices. So the pentagon must contain at least one edge with negative slope; we therefore adopt the terminology of our proof of \Cref{expLatStronger}.
    
    If $\alpha\ge 2,$ then $\log_{\alpha}(\tfrac{\alpha}{\alpha-1})\le 1.$ This means both $i$ and $j$ in the above proof are $0,$ which means $c=d=e.$ So the only possible vertices in an empty polygon are $a,b,c,g,h,$ which are exactly the points stated in the corollary. 
\end{proof}

\subsection{Lower Bounds}\label{explatLower}
We will now prove that $h(L_d(\alpha))\geq\binom{k+d-1}{d-1},$ where $k=\left\lfloor\sqrt{\tfrac{1}{\alpha-1}}\,\right\rfloor.$

\begin{proof}[Proof of Theorem~\ref{expLatticeBound3d}]
    Consider the set of points $X$ in $L_d(\alpha)$ which lie on the convex surface $\prod_{i=1}^d x_i = \alpha^k.$  We claim that $\conv(X)$ is empty in $L_d(\alpha)$. Since $\prod_{i=1}^d x_i = \alpha^k$ is a convex function, the only possible points of $L_d(\alpha)$ that lie in $\conv(X)$ satisfy $\prod_{i=1}^d x_i \geq \alpha^k$. All the points satisfying $\prod_{i=1}^d x_i = \alpha^k$ are vertices of $\conv(X)$, so any other point in $\conv(X) \cap L_d(\alpha)$ satisfies $\prod_{i=1}^d x_i \geq \alpha^{k+1}$. On the other hand, the polytope $\conv(X)$ lies completely in the half-space $\sum_{i=1}^d x_i \leq \alpha^k + d - 1$. (The bounding hyperplane of this half-space coincides with the facet determined by the vertices $(1,1,\ldots,\alpha^k),\ldots,(\alpha^k,1,\ldots,1)$.) Therefore, to show that $\conv(X)$ is empty in $L_d(\alpha)$, it suffices to show that the surface $\prod_{i=1}^d x_i \geq \alpha^{k+1}$ lies strictly above the hyperplane $\sum_{i=1}^d x_i = \alpha^k + d - 1$.
    
    The point on $\prod_{i=1}^d x_i \geq \alpha^{k+1}$ with minimum sum of coordinates is $(\alpha^{(k+1)/d},\ldots,\alpha^{(k+1)/d})$, so it suffices to show that
    \[d\alpha^{(k+1)/d}\ge\alpha^k+d-1.\]
    We will apply the substitution $\alpha=1+s^2$. Using the inequalities $(1+s^2)^{(k+1)/d}\ge 1+\tfrac{k+1}{d}s^2$ (by the binomial theorem) and $(1+s^2)^k<e^{s^2k}$ (by Taylor approximation), we can see that it is enough to prove that
    \[d\left(1+\frac{k+1}{d}s^2\right)\ge e^{s^2k}+d-1.\]
    Our particular choice of $k$ is $k=1/s$, so the previous inequality simplifies to 
    \[s^2+s+1\ge e^s.\]
    This is true because $k\geq 1$ and thus $s\leq 1.$ So we have found an empty polytope. 
    
    The number of vertices in this polytope is the number of ordered $d$-tuples of nonnegative integers $(n_1,n_2,\dots,n_d)$ with $\prod_{i=1}^k \alpha^{n_i} = \alpha^k$; in other words, the number of $d$-tuples $(n_1,\dots,n_d)$ with $\sum_{i=1}^d n_i = k$. This is well known to be $\binom{k+d-1}{d-1}$ (for example, by the ``stars and bars'' method).
\end{proof}

\section{2-Syndetic Sets}\label{infEmpty}

A set $A \subseteq \Z$ is called \emph{$2$-syndetic} if for every $n \in \Z$, either $n \in A$ or $n+1 \in A$ (or both). To construct a $2$-syndetic set $A$ for which $A^2$ has infinite Helly number, Dillon \cite{Dillon} constructed a sequence of successively larger empty polygons using a sequence of rational approximation to a fixed irrational number. In \cite{Ambrus}, Ambrus, \textit{et al.} prove that the ``Fibonacci lattice'' contains empty polygons with arbitrarily many vertices. We combine these approaches to construct a 2-syndetic set containing an infinite set of vertices in convex position whose convex hull is empty.

\begin{proposition}
    There is a set $A \subseteq \Z$ such that
    \begin{itemize}
        \item for every $n \in \Z$, either $n \in A$ or $n+1 \in A$ (or both), and
        \item $A\times A$ contains an infinite set $\{p_i\}_{i\geq 1}$ in convex position such that $\conv{\{p_i\}_{i\geq 1}}$ is empty.
    \end{itemize}
\end{proposition}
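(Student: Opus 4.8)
The plan is to combine the two ingredients recalled above: Dillon's rational‑approximation method \cite{Dillon}, which produces larger and larger empty polygons inside one fixed $2$-syndetic set, and the construction of Ambrus et al.\ \cite{Ambrus} of a single infinite empty polygon in the Fibonacci lattice. The first step is a soft reduction. Suppose we can produce one sequence $v_1,v_2,v_3,\dots$ of points of $A^2$ such that, for every $n$, the polygon $\conv\{v_1,\dots,v_n\}$ has exactly $v_1,\dots,v_n$ as its vertices and is empty in $A^2$. Then $\conv\{v_j:j\ge 1\}$ is an empty polygon in $A^2$ with infinitely many vertices: any $z\in A^2\cap\conv\{v_j:j\ge 1\}$ is a convex combination of finitely many of the $v_j$, so it lies in some $\conv\{v_1,\dots,v_n\}$ and hence equals one of the $v_k$; and each $v_k$ stays a vertex of the full hull, since otherwise $v_k\in\conv(\{v_1,\dots,v_n\}\setminus\{v_k\})$ for some $n$, contradicting that $v_k$ is a vertex of $\conv\{v_1,\dots,v_n\}$. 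So it is enough to build $A$ together with such a sequence --- one increasing chain of empty polygons, rather than a separate polygon for each size, as in Dillon's theorem.

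To do this I would follow Dillon and fix an irrational $\theta$ with an eventually periodic continued fraction --- for concreteness the golden ratio, all of whose partial quotients equal $1$ --- with convergents $p_k/q_k$, so that the denominators $q_k$ grow at least as fast as the Fibonacci numbers. The vertices $v_k$ would be lattice points built from the $p_k$ and $q_k$, placed in convex position along a curve that converges in direction to the irrational slope $\theta$, so that each $v_k$ is automatically a vertex of every hull $\conv\{v_1,\dots,v_n\}$ with $n\ge k$; the resulting $\conv\{v_j:j\ge 1\}$ is then unbounded, which is necessary since $\Z^2$ has no infinite empty subset (its Helly number is finite), so the construction must genuinely exploit that $A^2$ is not a lattice. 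The set $A$ would be defined scale by scale: starting from a Beatty‑type $2$-syndetic set, one forces into $A$ the (few, widely spaced) integers occurring as coordinates of the $v_k$, and one forces out of $A$ the integers occurring as coordinates of the lattice points interior to the $\conv\{v_1,\dots,v_n\}$. Because the $q_k$ grow at least geometrically, the ``forced in'' and ``forced out'' sets are sparse and well separated, so the remaining integers can be distributed so as to keep $A$ $2$-syndetic, and the self‑similarity coming from the periodicity of the continued fraction lets one recipe work at every scale. Emptiness of each $\conv\{v_1,\dots,v_n\}$ in $A^2$ then says precisely that every one of its interior lattice points has a coordinate that was deliberately kept out of $A$.

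The main obstacle is exactly this bookkeeping: making one set $A$ serve the entire chain. In Dillon's argument $A$ can be tailored to a single polygon, so ``$A$ contains its vertex coordinates'' and ``$A$ misses its interior coordinates'' never conflict. Here one must reconcile, simultaneously and at all scales, that every vertex coordinate of every $v_k$ lies in $A$, that every interior coordinate of every $\conv\{v_1,\dots,v_n\}$ lies outside $A$, that $A$ contains no gap of length two, and that adjoining $v_{n+1}$ destroys neither the vertices nor the emptiness already achieved (so that the soft reduction applies). Choosing $\theta$, the points $v_k$, and the assignment of integers to $A$ so that all of these hold at once is the crux, and is where the Fibonacci‑lattice construction of Ambrus et al.\ has to be reworked in the $2$-syndetic setting.
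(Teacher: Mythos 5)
Your overall strategy is the right one, and your ``soft reduction'' (an increasing chain of nested empty polygons with nested vertex sets yields a single infinite empty polygon) is correct and is implicitly how the paper's construction works. You have also correctly identified the two ingredients the paper combines, down to the choice of the golden ratio and Fibonacci denominators. But the proposal stops exactly where the proof has to begin: you never exhibit the points $v_k$, never define $A$, and you explicitly defer the reconciliation of ``every vertex coordinate lies in $A$,'' ``every interior lattice coordinate lies outside $A$,'' and ``$A$ is $2$-syndetic'' as ``the crux.'' That reconciliation is the entire content of the proposition, so as written this is a plan rather than a proof.

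The missing idea that makes the bookkeeping collapse is geometric, not combinatorial. The paper takes $p_i=(-F_{2i},\,2F_{2i+1}-1)$, verifies via Binet's formula that these are in convex position and that their convex hull $\mathcal P$ lies in a strip of \emph{vertical width} $1$ about the line $y=-2\phi x$. Because the strip has vertical width $1$, no two lattice points of $\mathcal P$ share a $y$-coordinate; because the slope has absolute value greater than $2$, the $y$-coordinates of any two lattice points of $\mathcal P$ differ by at least $2$. Hence the set $Y$ of $y$-coordinates of the non-vertex lattice points of $\mathcal P$ contains no two consecutive integers, so $A=\Z\setminus Y$ is automatically $2$-syndetic, the vertices survive (their $y$-coordinates are distinct from everything in $Y$, and their $x$-coordinates are negative while $Y$ consists of positive integers), and every non-vertex lattice point of $\mathcal P$ is killed in one coordinate. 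There is no Beatty-type base set, no scale-by-scale assignment, and no conflict to resolve: a single deletion does everything at once. Your proposed iterative ``force in / force out'' scheme might conceivably be made to work, but you give no mechanism guaranteeing that the forced-out integers never form a gap of length two or collide with a forced-in vertex coordinate, and without an a priori separation property of the kind the width-$1$ strip provides, I do not see how to close that.
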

\begin{proof}
Define the Fibonacci numbers by $F_0=0, F_1=1,$ and $F_n=F_{n-1}+F_{n-2}.$ Let $\phi=\tfrac{1+\sqrt{5}}{2}$ and $\psi=\tfrac{1-\sqrt{5}}{2}.$ Set
\[\{p_i=(-F_{2i},2F_{2i+1}-1)\}_{i\ge 1}.\]
Using Binet's formula, the slope of $\overline{p_ip_{i+1}}$ is 
\begin{align*}\frac{(2F_{2i+3}-1)-(2F_{2i+1}-1)}{(-F_{2i+2})-(-F_{2i})}&=-2\frac{F_{2i+2}}{F_{2i+1}}\\&=-2\left(\frac{\phi^{2i+2}-\psi^{2i+2}}{\phi^{2i+1}-\psi^{2i+2}}\right)\\&=-2\left(\phi+\frac{(\phi-\psi)(\psi^{2i+1})}{\phi^{2i+1}-\psi^{2i+1}}\right)\\&=-2\left(\phi+\frac{\psi^{2i+1}}{F_{2i+1}}\right).\end{align*}
Because $-1<\psi < 0$ and $F_{2i+1}$ is increasing, this expression decreases as $i$ increases. So the points $p_i$ are in convex position. Let $\mathcal{P}=\conv{\{p_i\}_{i\geq 1}}.$

Let $\ell$ be the line $y=-2\phi x.$ By Binet's Formula, the vertical distance between $p_i$ and $\ell$ is

\[2\phi\left(\frac{\phi^{2i}-\psi^{2i}}{\phi-\psi}\right)-\left(\frac{2\phi^{2i+1}-2\psi^{2i+1}}{\phi-\psi}-1\right)=1-2\psi^{2i}.\]
In particular, as $i$ increases, the distance between $p_i$ and $\ell$ increases and approaches $1.$

Let $L$ be the strip defined by $-2\phi x - 1 < y = -2\phi x$. We have concluded that $\mathcal P \subseteq L$. Since the vertical width of $L$ is 1, no two integer points in $L$ have the same $y$ coordinate; because the slope of $\ell$ is less than $-2$, the $y$-coordinates of any two lattice points in $\mathcal P$ differ by at least 2. In particular, the $y$-coordinates of the points $p_i$ are distinct from the $y$-coordinates of any other integer points that $\mathcal P$ contains. If $Y$ be the set of $y$-coordinates of non-vertex lattice points in $\mathcal P$. Then $A = \Z \setminus Y$ is a 2-syndetic set and $\mathcal P$ is a empty in $A\times A,$ as desired.
\end{proof}

\section{Arithmetic Congruence Sets}\label{acong}

The main result of this section is an upper bound on arithmetic congruence sets, restated here for convenience.

\ThmCongruenceSet*

\begin{proof}
Consider an empty polygon $\mathcal{P}$ with vertices in $A^d.$ Suppose for the sake of contradiction that $\mathcal{P}$ has two vertices $u$ and $v$ with $v-u \in m\Z^d$. For each $1 \leq j \leq m-1$, let $c_j = u + \frac{j}{m} (v-u)$, which is an integer point. Fix $1\leq i\leq d$, and consider the $i$th coordinates of the points $c_1,\dots,c_{m-1},$ which form an arithmetic progression modulo $m.$
\begin{itemize}
    \item If this arithmetic progression is constant modulo $m,$ then since the $i$th coordinates of $u$ and $v$ are in $A,$ the $i$th coordinates of $c_1,\dots,c_{m-1}$ are also in the set $A.$
    \item If this arithmetic progression is nonconstant, then any residue appears in the progression at most $\tfrac{m}{p}$ times. Therefore, the $i$th coordinates of $c_1,\dots,c_{m-1}$ contain at most $\tfrac{m}{p}(m-k)$ residues \textbf{not} in $A.$
\end{itemize}
In either case, the $i$th coordinates of $c_1,\dots,c_{m-1}$ contain at most $\tfrac{m}{p}(m-k)$ residues not in $A$. Summing over all $i,$ there are at most $\tfrac{m}{p}(m-k)d$ coordinates among $c_1,\dots,c_{m-1}$ which are not in $A.$ However, we know $\tfrac{m}{p}(m-k)d<m-1.$ So by the Pigeonhole Principle, there is some $c_i$ with all of its coordinates in $A.$ This violates the emptiness condition.

Thus, there is at most one vertex of any given sequence of remainders modulo $m,$ so there are at most $k^d$ total vertices in $\mathcal{P}.$
\end{proof}

The assumption that $d<\tfrac{p(m-1)}{m(m-k)}$ cannot be strengthened to an equality. For example, the following case satisfies $d=\tfrac{p(m-1)}{m(m-k)}$ but $h(A^d)>k^d.$

\begin{proposition}\label{thm:01mod3}
     if $S = \{0,1\}$ and $A = S + 3\Z$, then $h(A^2) = 8$.
\end{proposition}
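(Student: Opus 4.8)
The plan is to establish the claim in two directions. For the upper bound $h(A^2) \le 8$, by Hoffman's theorem it suffices to show that every empty polygon in $A^2$ has at most $8$ vertices, where $A = \{0,1\} + 3\Z$. I would reuse the coordinate-residue argument from the proof of \Cref{aconggeneral}: here $m = 3$, $p = 3$, $k = |S| = 2$, so two vertices $u,v$ of an empty polygon cannot satisfy $v - u \in 3\Z^2$, since in that case the midpoint-type points $c_1, c_2$ would have, in each coordinate, an arithmetic progression mod $3$ that is either constant (hence landing in $A$) or hits each residue at most once — and a short check shows one of $c_1, c_2$ must then lie in $A^2$, contradicting emptiness. (The inequality $\tfrac{m}{p}(m-k)d < m - 1$ fails for these parameters, which is exactly why we only get $k^d = 4$ from the theorem and must work harder.) So the residue classes of the vertices mod $3$ are distinct, giving at most $|S|^2 \cdot \text{(number of residue classes per coordinate used by } A)$... more precisely, each vertex lies in one of the $9$ classes of $\Z^2 / 3\Z^2$, but only those classes $(\bar a, \bar b)$ with $\bar a, \bar b \in \{0,1\}$ are available, so there are at most $4$ vertices — wait, this would give $4$, not $8$. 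The resolution: a vertex of $\mathcal P$ need not have its \emph{integer representative} in $\{0,1\} + 3\Z$ with residue in $\{0,1\}$; rather $A = \{3t, 3t+1 : t \in \Z\}$, so residues mod $3$ are $0$ and $1$, and indeed at most one vertex per class mod $3$ gives $\le 4$. So the correct statement must be that $v - u \in 3\Z^2$ \emph{is} sometimes forced, and the bound $8$ comes from a finer analysis — I would instead argue directly that an empty polygon can repeat a residue class but not too often, bounding the count by a direct geometric/emptiness argument on configurations of points sharing a residue class mod $3$.

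For the lower bound $h(A^2) \ge 8$, I would exhibit an explicit empty octagon in $A^2$. The natural candidate is built from the eight points of $\{0,1\}^2 + \{0,3\}^2$-type translates, or more likely a symmetric octagon such as the convex hull of $\{(0,1),(1,0),(3,0),(4,1),(4,3),(3,4),(1,4),(0,3)\}$ (all coordinates lie in $\{0,1,3,4\} \subseteq A$), and verify that the only interior or boundary lattice points are the eight vertices themselves — in particular checking that the "forbidden" residues $2 \pmod 3$ in each coordinate block out the center $(2,2)$ and the edge midpoints. I would confirm convex position (all eight points are vertices) and then enumerate the $O(1)$ lattice points inside the convex hull, checking each has some coordinate $\equiv 2 \pmod 3$.

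The main obstacle is the upper bound: unlike \Cref{aconggeneral}, the pigeonhole inequality does not close, so two vertices sharing a residue class mod $3$ is possible, and I need a genuinely new argument — presumably a case analysis on how many vertices lie in each of the four available residue classes mod $3$, combined with the emptiness condition applied to the integer points lying strictly between same-class vertices (the points $c_1, c_2$ above), to rule out having more than two vertices in any single class and more than eight overall. Pinning down this combinatorial-geometric case analysis cleanly — rather than as an ad hoc enumeration — is where the real work lies; the lower-bound construction should be routine verification once the right octagon is written down.
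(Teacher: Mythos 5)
Your high-level skeleton for the upper bound is the same as the paper's (at most two vertices in each of the four available residue classes of $(\Z/3\Z)^2$, hence at most $8$ vertices), but the step you defer as ``where the real work lies'' is precisely the content of the proof, and the tool you propose for it is not enough. You suggest applying emptiness only to the integer points $c_1,c_2$ lying on segments between same-class vertices. Two same-class vertices are perfectly consistent with emptiness (the paper's own extremal octagon has such pairs), so you must rule out \emph{three} vertices $(3x_1,3y_1),(3x_2,3y_2),(3x_3,3y_3)$ in one class; and the pairwise segment points alone cannot do this: taking $x$-residues $(0,1,2)$ and $y$-residues $(0,2,1)$ for $(x_i,y_i)$, every one of the six points $(2x_i+x_j,\,2y_i+y_j)$ has some coordinate $\equiv 2 \pmod 3$, so no contradiction arises from segments. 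The missing ingredient in the paper is the \emph{centroid} $(x_1+x_2+x_3,\,y_1+y_2+y_3)$: emptiness forces it to have a coordinate $\equiv 2 \pmod 3$, say the $x$-coordinate, which rules out $x_1,x_2,x_3$ being pairwise distinct mod $3$ (their sum would be $\equiv 0$); so some $x_i \equiv x_j$, and then the two segment points between those two vertices have $x$-coordinate $\equiv 0$, forcing both of their $y$-coordinates to be $\equiv 2$, impossible since those two $y$-coordinates sum to $0 \pmod 3$. (Note the residue pattern above is exactly one the centroid kills: it puts the centroid in $A^2$.)

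Your lower bound also fails as written: the symmetric octagon with vertices $(0,1),(1,0),(3,0),(4,1),(4,3),(3,4),(1,4),(0,3)$ contains the points $(1,1),(1,3),(3,1),(3,3) \in A^2$ in its interior, so it is not empty. An empty octagon in $A^2$ has to be thin; the paper uses $(0,0),(1,0),(3,1),(6,3),(7,4),(6,4),(4,3),(1,1)$, which hugs a line of slope roughly $4/7$ so that the eight vertices are the only points of $A^2$ it contains. So both halves of your plan need repair: the upper bound needs the centroid argument (or some substitute interior point beyond the pairwise segment points), and the lower bound needs a different, elongated construction.
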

\begin{proof}
A construction of $8$ points is shown in Figure~\ref{8const}.

\begin{figure}[b]
\centering
\begin{tikzpicture}[gray, scale=0.7]
    \draw (0,-1)--(0,5);
    \draw (1,-1)--(1,5);
    \draw (3,-1)--(3,5);
    \draw (4,-1)--(4,5);
    \draw (6,-1)--(6,5);
    \draw (7,-1)--(7,5);
    \draw (-1,0)--(8,0);
    \draw (-1,1)--(8,1);
    \draw (-1,3)--(8,3);
    \draw (-1,4)--(8,4);
    \draw[ForestGreen, thick, every node/.style={ForestGreen,shape=circle,fill,inner sep=1.4pt}] (0,0) node{}--(1,0) node{}--(3,1) node{}--(6,3) node{}--(7,4) node{}--(6,4) node{}--(4,3) node{}--(1,1) node{}--cycle;
\end{tikzpicture}
\caption{An $8$-vertex empty polygon in $\big(\{0,1\}+3\Z\mspace{2mu}\big)^2$.}
\label{8const}
\end{figure}
We claim that any empty polygon $P$ has at most two vertices whose modulo-$3$ residues are $(0,0)$. Then, by applying the symmetry of $A$, each of the four possible residues in $(\Z/3\Z)^2$ can appear at most twice among the vertices of an empty polygon---so any empty polygon has at most $8$ vertices.

Suppose for the sake of contradiction that $(3x_1,3y_1),(3x_2,3y_2),(3x_3,3y_3)$ are all vertices of $P$.
Since $P$ is empty, the centroid $(x_1+x_2+x_3,y_1+y_2+y_3)$ must have a coordinate that is congruent to $2$ (mod 3). We may assume $x_1+x_2+x_3\equiv 2\pmod{3}.$ Since the residues of $x_1,x_2,x_3$ take values in $\{0,1\}$, they cannot be distinct modulo $3$. The arguments are all the same, so assume $x_1\equiv x_2\pmod{3}.$ The congruence conditions guarantee that following convex combination are integer points:
\[(2x_1+x_2,2y_1+y_2),\]
\[(2x_2+x_1,2y_2+y_1).\]
Since the $x$-coordinates of both points are 0 (mod 3), the $y$-coordinates of both points must have remainder 2 (mod 3). But this is impossible, since the sum of the $y$-coordinates is $0\pmod{3}$. Therefore $P$ contains at most 2 vertices congruent to $(0,0)$ mod $3\Z^2$. 
\end{proof}

\section{Open Problems}\label{sec:problems}
The main unsolved problem in  this paper is bounding the number of vertices of empty polygons in exponential lattices in 3 or more dimensions. In particular, it is still unknown whether any three-dimensional exponential lattice contains an empty polygon with infinitely many vertices.

\begin{problem}
    Is $h\big(L_3(\alpha)\big) < \infty$?
\end{problem}

While the majority of our proof of \Cref{expLatStronger} does extend to higher dimensions, we cite \Cref{thm:explat-nonnegative-slope} to handle a special case, and its proof in \cite{Ambrus} relies on ordering the edge slopes of a polygon. Since facets in higher dimensions cannot be ordered in this way, a new method is required.

The growth of $h(L_d(\alpha))$---assuming, of course, it is finite---is also unknown. In the two-dimensional case, Ambrus, Balko, Frankl, Jung, and Nasz\'odi~\cite{Ambrus} note that if $\alpha=1+\tfrac{1}{x},$ the best known bounds in the plane are
\[
    \lfloor\sqrt{x}\rfloor\le h(L_2(\alpha))\le C\mspace{2mu} x\log(x)
\]
for some $C > 0$.

\begin{problem}
    Improve the asymptotic bounds on $h\big( L_2(1+\frac{1}{x})\big)$ as $x \to \infty$.
\end{problem}

In higher dimensions, as $x\to\infty,$ \Cref{explatLower} shows that for every $d$, there is a $c_d > 0$ such that
\[
    c_d\, x^{(d-1)/2}\le h(L_d(\alpha)),
\]
though of course we have no corresponding upper bound.

Our bounds on arithmetic congruence sets are strongest when $m$ is prime and $S$ is a significant proportion of $\{1,2,\dots,m\}$. What happens when these conditions aren't true? There may be much to explore in this direction.

\begin{problem}
    Prove further bounds on Helly numbers for arithmetic congruence sets.
\end{problem}

\section{Acknowledgements}
This research was conducted under the auspices of the \scalebox{0.9}{MIT} \scalebox{0.9}{PRIMES}--\scalebox{0.9}{USA} program. Dillon was further supported by a National Science Foundation Graduate Research Fellowship under Grant No. 2141064.

\bibliographystyle{amsplain-nodash}
\bibliography{ref}

\ifx\undefined\bysame
\newcommand{\bysame}{\leavevmode\hbox to3em{\hrulefill}\,}
\fi
\begin{thebibliography}{10}

\bibitem{Ambrus}
Gergely Ambrus, Martin Balko, N\'ora Frankl, Attila Jung, and M\'arton Naszódi, {\em {On Helly numbers of exponential lattices}}, European Journal of Combinatorics {\bf 116} (2024), 103884.

\bibitem{Helly-survey}
Nina Amenta, Jes{\'u}s~A. De~Loera, and Pablo Sober{\'o}n, {\em {Helly's Theorem: New Variations and Applications}}, 2016, \href{https://arxiv.org/abs/1508.07606}{arXiv:1508.07606 [math.MG]}.

\bibitem{averkov-mixed-integer}
Gennadiy Averkov and Robert Weismantel, {\em Transversal numbers over subsets of linear spaces}, Advances in Geometry {\bf 12} (2012), no.~1, 19--28.

\bibitem{Barany-Helly-survey}
Imre B\'ar\'any and Gil Kalai, {\em Helly-type problems}, Bulletin of the American Mathematical Society {\bf 59} (2022), no.~4, 471--502.

\bibitem{Bell1977}
David~E. Bell, {\em A theorem concerning the integer lattice}, Studies in Applied Mathematics {\bf 56} (1977), no.~2, 187--188.

\bibitem{DeLoera}
Jes{\'u}s~A. De~Loera, R.{\,}N.~La Haye, D.~Oliveros, and E.~Roldán-Pensado, {\em {Helly numbers of algebraic subsets of $\R^d$ and an extension of Doignon’s Theorem}}, Advances in Geometry {\bf 17} (2017), no.~4, 473--482.

\bibitem{Dillon}
Travis Dillon, {\em {Discrete quantitative Helly-type theorems with boxes}}, Advances in Applied Mathematics {\bf 129} (2021), 102217.

\bibitem{Doignon}
Jean-Paul Doignon, {\em Convexity in cristallographical lattices}, Journal of Geometry {\bf 3} (1973), no.~1, 71--85.

\bibitem{garber2023helly}
Alexey Garber, {\em On helly numbers for crystals and cut-and-project sets}, Studia Scientiarum Mathematicarum Hungarica {\bf 61} (2024), no.~3, 203--214.

\bibitem{Helly}
Eduard Helly, {\em {Über Mengen konvexer Körper mit gemeinschaftlichen Punkte.}}, Jahresbericht der Deutschen Mathematiker-Vereinigung {\bf 32} (1923), 175--176.

\bibitem{Hoffman1979}
Alan~J. Hoffman, {\em {Binding constraints and Helly numbers}}, Annals of the New York Academy of Sciences {\bf 319} (1979), no.~1 Second Intern, 284--288.

\bibitem{Radon}
Johann Radon, {\em {Mengen konvexer K{\"o}rper, die einen gemeinsamen Punkt enthalten}}, Mathematische Annalen {\bf 83} (1921), no.~1, 113--115.

\bibitem{Scarf1977}
Herbert~E. Scarf, {\em An observation on the structure of production sets with indivisibilities}, Proceedings of the National Academy of Sciences {\bf 74} (1977), no.~9, 3637--3641.

\end{thebibliography}

\end{document}